\documentclass[11pt]{amsart}
\usepackage{amssymb,amsmath,amsthm,latexsym}
\usepackage{mathrsfs}
\newtheorem{lemma}{Lemma}
\newtheorem{theorem}{Theorem}

\newtheorem{proposition}[theorem]{Proposition}
\newtheorem{corollary}[theorem]{Corollary}

\newtheorem{remark}[theorem]{Remark}
\usepackage{graphicx}
\theoremstyle{remark}
\numberwithin{equation}{section}
\bibliographystyle{amsplain}

\newcommand{\Z}{\mathbb{Z}}

\def\Z {\mathbb{Z}}
\def\Q {\mathbb{Q}}

\def\C {\mathbb{C}}

\usepackage{amssymb}

\begin{document}

\title[An asymptotic formula for Goldbach's conjecture]{An asymptotic formula for Goldbach's conjecture with monic polynomials in \(\Z[\theta][x]\)}


\thanks{Correspondence to: Ab\'ilio Lemos}
\thanks{E-mail: abiliolemos@ufv.br}



\subjclass[2010]{Primary 11R09; Secundary 11C08.}




\maketitle

\begin{center}
{ Ab\'ilio Lemos and A. L. A. de Araujo \\ \small {Universidade Federal de Vi\c{c}osa, CCE, Departamento de Matem\'atica, Vi\c{c}osa, MG, Brasil} }
\end{center}

\begin{abstract}
In this paper, we consider \(D=\mathbb{Z}[\theta]\), where

\[\theta=\left\{\begin{array}{ccc}
\sqrt{-k} & \mbox{if}\;\;\;-k\not\equiv 1 \;(\mbox{mod}\;4)\\ 
	\frac{\sqrt{-k}+1}{2} & \mbox{if}\;\;\;-k\equiv 1 \;(\mbox{mod}\;4)\\ 
	 \end{array}\right.,\] 
\(k\geq 2\) is a squarefree integer, and we proved that the number \(R(y)\) of representations of a monic polynomial \(f(x)\in \Z[\theta][x]\), of degree \(d\geq 1\), as a sum of two monic irreducible polynomials \(g(x)\) and \(h(x)\) in \(\Z[\theta][x]\), with the coefficients of \(g(x)\) and \(h(x)\) bounded in complex modulus by \(y\), is asymptotic to \((4y)^{2d-2}\).

\end{abstract}

\small{Keywords and Phrases. Irreducible polynomials; Goldbach's problem for
polynomials.}





\par

\section{Introduction}

Hayes \cite{Hayes1}, in 1965, showed that Goldbach's conjecture is considerably simpler for
polynomials with integer coefficients. In fact,  the following result:

\begin{theorem}
If $f(x)$ is a monic polynomial in $\Z[x]$ with $\partial f= d > 1$, then there
exist monic irreducible polynomials $g(x)$ and $h(x)$ in $\Z[x]$ with the property that
$f(x)=g(x) + h(x)$.
\end{theorem}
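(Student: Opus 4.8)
The plan is to build $g$ and $h$ by prescribing $h$ as a monic polynomial of degree $d-1$ with free coefficients and then setting $g=f-h$. Since $f$ is monic of degree $d$, two monic summands cannot both have degree $d$ (their leading terms would add to $2x^{d}$); hence one summand must have degree $d$ and carry the leading term of $f$, while the other has degree $d-1$. Writing $f(x)=x^{d}+a_{d-1}x^{d-1}+\cdots+a_{0}$ and $h(x)=x^{d-1}+c_{d-2}x^{d-2}+\cdots+c_{0}$, one gets $g(x)=x^{d}+(a_{d-1}-1)x^{d-1}+(a_{d-2}-c_{d-2})x^{d-2}+\cdots+(a_{0}-c_{0})$, which is automatically monic of degree $d$. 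The problem is thereby reduced to choosing the integers $c_{0},\ldots,c_{d-2}$ so that both $h$ and $g$ are irreducible.

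First I would secure the irreducibility of $h$ by Eisenstein's criterion: fix a prime $p$ and impose $c_{i}\equiv 0\pmod{p}$ for $0\le i\le d-2$ together with $c_{0}\not\equiv 0\pmod{p^{2}}$. Because the $c_{i}$ are entirely at our disposal, these congruences place no demand on $f$, so $h$ is $p$-Eisenstein and hence irreducible over $\Q$. To control $g$ I would introduce a second prime $q\neq p$ and prescribe the residues $c_{i}\bmod q$; since $p\neq q$, the Chinese Remainder Theorem lets me satisfy the mod-$p$ and mod-$q$ conditions simultaneously with a single integer choice of each $c_{i}$. Reducing modulo $q$, the coefficients of $\bar g$ in degrees $0,\ldots,d-2$ are $\bar a_{i}-\bar c_{i}$ and can be made arbitrary, so $\bar g$ may be taken to be any monic polynomial of degree $d$ over $\mathbb{F}_{q}$ whose coefficient of $x^{d-1}$ equals $a_{d-1}-1\bmod q$.

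The main obstacle is that the subleading coefficient of $g$, namely $a_{d-1}-1$, is fixed by $f$ and cannot be altered. If one tries to make $g$ itself $q$-Eisenstein one needs $q\mid a_{d-1}-1$, which is available (take a prime factor) unless $a_{d-1}\in\{0,2\}$, the two values for which $a_{d-1}-1=\mp 1$. I would bypass this uniformly by not insisting on Eisenstein for $g$: instead I require only that the reduction $\bar g\in\mathbb{F}_{q}[x]$ be irreducible of degree $d$, which forces $g$ to be irreducible over $\Q$. By the previous paragraph I am free to make $\bar g$ any monic degree-$d$ polynomial with prescribed coefficient $a_{d-1}-1$ in degree $d-1$; the key input is therefore the existence of a monic irreducible polynomial of degree $d$ over $\mathbb{F}_{q}$ with prescribed subleading coefficient (equivalently, prescribed trace). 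This holds once $q$ is large enough, since the number of such irreducibles is $\tfrac{1}{q}\bigl(\tfrac{q^{d}}{d}+O(q^{d/2})\bigr)$, which is positive; as we may enlarge $q$ among the infinitely many primes, such a $\bar g$ always exists.

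Choosing the $c_{i}$ accordingly and lifting by the Chinese Remainder Theorem yields integers making $h$ a $p$-Eisenstein polynomial and $\bar g$ irreducible modulo $q$. By Gauss's lemma both $g$ and $h$ are then irreducible in $\Z[x]$; they are monic by construction and satisfy $g+h=f$. The case $d=2$ is immediate, as $h=x+c_{0}$ is automatically irreducible and $c_{0}$ can be chosen so that the quadratic $g$ has non-square discriminant, so no exceptional analysis is needed there.
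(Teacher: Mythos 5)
Your proposal is correct, but note that the paper itself contains no proof of this statement: it is quoted as Hayes's theorem and cited to \cite{Hayes1}, so the comparison is with the classical argument, which (as recounted in \cite{Saidak} and \cite{Kozek}) rests on Eisenstein's criterion together with the Chinese Remainder Theorem applied to \emph{both} summands. You follow the same skeleton --- fix $h$ monic of degree $d-1$ with free coefficients, set $g=f-h$, impose $p$-Eisenstein conditions on $h$, and glue congruences by CRT --- but you replace the Eisenstein condition on $g$ by irreducibility of the reduction $\bar g \in \mathbb{F}_q[x]$. This is a genuine divergence, and it cleanly sidesteps exactly the obstruction you identified: the subleading coefficient $a_{d-1}-1$ of $g$ is frozen, and a naive double-Eisenstein argument breaks down when $a_{d-1}\in\{0,2\}$, a point the classical route must dodge by other means (e.g.\ a substitution $x\mapsto x+t$, which shifts $a_{d-1}$ by $dt$). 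What your route costs is the input on irreducibles over $\mathbb{F}_q$ with prescribed trace, whose $q^{d-1}/d + \mbox{\textit{O}}(q^{d/2-1})$ count ultimately relies on Carlitz-type equidistribution; but this heavy appeal is avoidable by an elementary observation you could add: choose a prime $q>d$, take any monic irreducible $B_0$ of degree $d$ over $\mathbb{F}_q$ (existence follows from the M\"obius count $\frac{1}{d}\sum_{e\mid d}\mu(e)q^{d/e}>0$), and note that $B_0(x+s)$ is irreducible with subleading coefficient shifted by $ds$; since $q\nmid d$, varying $s$ sweeps every residue, so a prescribed-trace irreducible exists for free. Two small blemishes, neither fatal: the inference ``hence the other has degree $d-1$'' is a non sequitur as a general claim (the complementary summand can have any degree from $1$ to $d-1$), though harmless since you only need one decomposition; and your concluding appeal to Gauss's lemma for $g$ should be spelled out as: any factorization of the monic $g$ over $\Q$ may be taken with monic factors in $\Z[x]$, whose reduction mod $q$ would nontrivially factor the irreducible $\bar g$ of the same degree $d$.
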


In a recent note, Saidak \cite{Saidak}, improving on a result of Hayes, gave Chebyshev-type estimates for the number $R(y)=R_{f}(y)$ of representations of the monic polynomial $f(x) \in \mathbb{Z}[x]$ of degree $d>1$ as a sum of two irreducible monics $g(x)$ and $h(x)$ in $\mathbb{Z}[x]$, with the coefficients of $g(x)$ and $h(x)$ bounded in absolute value by  $y$.

Here, we do not distinguish the sum $g(x)+h(x)$ from $h(x)+g(x)$, and whenever we write that a monic polynomial $p(x) \in \mathbb{Z}[x]$ is "irreducible", we mean irreducible over $\mathbb{Q}$. Saidak's argument with slight modifications gives that, for $y$ sufficiently large,
\[c_1y^{d-1} < R(y)< c_2y^{d-1},\]
where $c_1$ and $c_2$ are constants that depend of the degree and the coefficients of the polynomial $f(x)$. 

More recently, Kozek \cite{Kozek} gave a proof that the number $R(y)$ is asymptotic to $(2y)^{d-1}$, i.e.,
\[\lim_{y\longrightarrow \infty}\frac{R(y)}{(2y)^{d-1}}=1.\]
His approach implies that there is a constant $c_3$ depending only on $d$ such that if $y$ is sufficiently large, then
\[R(y)=(2y)^{d-1} + E, \ \ \textup{where} \ \ |E|\leq c_3y^{d-2}\ln(y).\]  

In $2011$, Dubickas \cite{Dubickas} proved a more general result for the number of representations of $f$
by the sum of $r$ monic irreducible (over $\mathbb{Q}$) integer polynomials $f_1, f_2, ..., f_r$ of
height at most $y$, i.e.,
\[f(x) = f_1(x) + f_2(x) + ... + f_r(x),\]
and, for $r=2$, Dubickas proved that
\begin{equation}\label{eq2}
	R(y)=(2y)^{d-1}+ \mbox{\textit{O}}(y^{d-2})
\end{equation}
for $d\geq 4$,
\begin{equation}\label{eq3}
R(y)=(2y)^{2}+ \mbox{\textit{O}}(y\ln(y))
\end{equation}
for $d=3$, and 
\begin{equation}\label{eq4}
R(y)=2y+ \mbox{\textit{O}}(\sqrt{y})
\end{equation}
for $d=2$. Moreover, for each $d\geq 4$, the error term in (\ref{eq2}) is the best possible for
some $f$. Note that this results improve the error term proved by Kozek \cite{Kozek}. 

In 2013, Dubickas \cite{Dubickas2} proved a necessary and sufficient condition on the list of nonzero integers $u_1, \ldots , u_r$,
$r \geq 2$, under which a monic polynomial $f(x) \in \Z[x]$ is expressible by a linear form $u_1 f_{1}(x) + \cdots + u_r f_{r}(x)$ in
monic polynomials $f_1(x), \ldots , f_r(x)\in \Z[x]$.


We say that $D$ satisfies the property (GC) if 
\[
\begin{array}{l}
\textup{Every element of} \  D[x] \ \textup{of degree} \ d\geq 1 \\
 \textup{can be written as the sum of two
irreducibles in }\,D[x].
\end{array}
\]
When $D=\Z$, we have the Hayes's theorem. Pollack \cite{Pollack} proved the following results:

\begin{proposition}\label{p1}
Suppose that $D$ is an integral domain which is Noetherian and has infinitely
many maximal ideals. Then $D$ has property (GC).
\end{proposition}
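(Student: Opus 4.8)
The plan is to build the two summands so that each satisfies an Eisenstein-type irreducibility criterion, but at two \emph{different} maximal ideals. Recall the criterion over a domain: if $\mathfrak{m}$ is a prime ideal of $D$ and $p(x)=c_nx^n+\cdots+c_0$ satisfies $c_n\notin\mathfrak{m}$, $c_0,\dots,c_{n-1}\in\mathfrak{m}$, and $c_0\notin\mathfrak{m}^2$, then $p$ admits no factorization into two factors of positive degree over $K=\mathrm{Frac}(D)$. Writing $f(x)=\sum_{i=0}^{d}a_ix^i$ with $a_d\neq 0$, I would fix two distinct maximal ideals $\mathfrak{m}_1,\mathfrak{m}_2$ and choose $g(x)=\sum_i b_ix^i$ so that $g$ is Eisenstein at $\mathfrak{m}_1$ while $h=f-g$, whose coefficients are $a_i-b_i$, is Eisenstein at $\mathfrak{m}_2$; both are then irreducible over $K$.

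Since $\mathfrak{m}_1\neq\mathfrak{m}_2$ are comaximal, so are $\mathfrak{m}_1^2$ and $\mathfrak{m}_2^2$, and the Chinese Remainder Theorem lets me prescribe each $b_i$ modulo $\mathfrak{m}_1^2\mathfrak{m}_2^2$, thereby solving the two families of congruences at once. For the leading term I would pick $b_d\notin\mathfrak{m}_1$ and $b_d\not\equiv a_d\pmod{\mathfrak{m}_2}$, which is possible because the residue fields are nontrivial, keeping $\deg g=\deg h=d$; for $1\le i\le d-1$ I would impose $b_i\in\mathfrak{m}_1$ and $b_i\equiv a_i\pmod{\mathfrak{m}_2}$; and for the constant term I would impose $b_0\in\mathfrak{m}_1\setminus\mathfrak{m}_1^2$ together with $a_0-b_0\in\mathfrak{m}_2\setminus\mathfrak{m}_2^2$. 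This last condition is the delicate one, and it is exactly here that the hypotheses enter: because $D$ has infinitely many maximal ideals it is not a field, so every maximal ideal is nonzero, and because $D$ is Noetherian the Krull intersection theorem forces $\mathfrak{m}_j\neq\mathfrak{m}_j^2$; hence $\mathfrak{m}_j\setminus\mathfrak{m}_j^2\neq\varnothing$ and an admissible $b_0$ exists.

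Having infinitely many maximal ideals also gives the freedom to choose $\mathfrak{m}_1,\mathfrak{m}_2$ outside the finite set of bad ideals I must avoid, and, more importantly, room to upgrade irreducibility over $K$ to genuine irreducibility in $D[x]$. This upgrade is the step I expect to be the main obstacle: an Eisenstein polynomial is irreducible over $K$ but may fail to be irreducible in $D[x]$ when its content is a non-unit, as already happens for $D=\mathbb{F}_2[t]$, where one cannot force both leading coefficients to be units. To repair this I would drop the requirement that both summands have degree $d$; instead I would let one summand carry the top coefficients of $f$ so that the other descends to low degree, and then use the abundance of maximal ideals to make the relevant contents units, so that each summand is a \emph{primitive} polynomial that is irreducible over $K$ and therefore irreducible in $D[x]$. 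Finally I would dispose of the base case $d=1$ directly, by writing $f=a_1x+a_0$ as a sum of two primitive linear polynomials.
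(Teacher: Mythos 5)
Note first that the paper does not actually prove Proposition \ref{p1}: it is quoted from Pollack \cite{Pollack}, so the relevant comparison is with Pollack's published argument. Your skeleton is indeed the same as his: a two-ideal Eisenstein scheme glued by the Chinese Remainder Theorem, with Noetherianity entering exactly where you put it (Krull intersection/Nakayama gives $\mathfrak{m}\neq\mathfrak{m}^2$ for a nonzero finitely generated maximal ideal, and $\mathfrak{m}\neq 0$ because a ring with infinitely many maximal ideals is not a field). Your congruence bookkeeping is also sound: $\mathfrak{m}_1^2$ and $\mathfrak{m}_2^2$ are comaximal, the residue-field argument for $b_d$ works, and the prescription $b_0\equiv\pi_1\ (\mathrm{mod}\ \mathfrak{m}_1^2)$, $b_0\equiv a_0-\pi_2\ (\mathrm{mod}\ \mathfrak{m}_2^2)$ with $\pi_j\in\mathfrak{m}_j\setminus\mathfrak{m}_j^2$ is exactly right.

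There are, however, two genuine problems. First, your opening criterion is false as stated: over a general domain, Eisenstein at a maximal ideal does \emph{not} preclude factorization over $K=\mathrm{Frac}(D)$; what the reduction-mod-$\mathfrak{m}$ argument actually gives is that there is no factorization into two factors of positive degree \emph{in} $D[x]$. For a concrete counterexample, take $D=\mathbb{Z}[2i]$ (Noetherian, infinitely many maximal ideals, $K=\mathbb{Q}(i)$): the polynomial $f=x^2-2x+2$ is Eisenstein at $\mathfrak{m}=(2,2i)$, since $2\in\mathfrak{m}\setminus\mathfrak{m}^2$ with $\mathfrak{m}^2=4\mathbb{Z}+4i\mathbb{Z}$, yet $f=(x-1-i)(x-1+i)$ over $K$. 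Since $D$ need not be a GCD domain, Gauss's lemma is unavailable and you cannot shuttle between $K$-irreducibility and $D[x]$-irreducibility; this particular defect is repairable by running the entire argument inside $D[x]$ (Eisenstein kills positive-degree factorizations; it remains to kill constant factors). Second, and this is the real gap: killing constant factors means showing each summand has unit content, i.e.\ no maximal ideal contains all its coefficients, and your treatment of this step --- ``use the abundance of maximal ideals to make the relevant contents units'' --- is a declaration, not an argument. The CRT only controls the coefficients modulo $\mathfrak{m}_1^2\mathfrak{m}_2^2$, while a nonunit common divisor can live in any \emph{other} maximal ideal; worse, a Noetherian domain with infinitely many maximal ideals can have nonzero Jacobson radical (localize $\mathbb{C}[s,t]$ at the multiplicative set $1+s\,\mathbb{C}[s,t]$: then $s$ lies in every maximal ideal), so even the move ``pick a maximal ideal avoiding a given nonzero element,'' which your degree-shifting repair implicitly needs, fails in general. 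This unit-content step is precisely where the substantive work in Pollack's proof lies (your $d=1$ base case has the same issue: a primitive linear polynomial is indeed irreducible in $D[x]$, but producing two unit-content linear summands requires an argument). As written, your construction proves only that $f$ is a sum of two polynomials admitting no factorization into two positive-degree factors, which is strictly weaker than property (GC).
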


\begin{corollary}
If $S$ is any integral domain, then $D = S[x]$ has property (GC).
\end{corollary}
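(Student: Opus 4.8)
The plan is to realize the corollary through Proposition~\ref{p1}, so I first test its hypotheses on $D=S[x]$. Adopting the paper's convention that ``irreducible'' means irreducible over the fraction field, and writing $T$ for the indeterminate of $D[T]$ (to keep it distinct from the $x$ inside $D=S[x]$), the goal is: every $F=\sum_{i=0}^d a_iT^i\in D[T]$ with $a_d\ne0$ and $d\ge1$ can be written $F=g+h$ with $g,h$ irreducible over $K:=\operatorname{Frac}(D)=\operatorname{Frac}(S)(x)$. Since $S$ is a domain, $D=S[x]$ is a domain, so the one hypothesis of Proposition~\ref{p1} that can fail is Noetherianity: by the Hilbert basis theorem $S[x]$ is Noetherian precisely when $S$ is, while $S$ is arbitrary here. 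Hence the essential job is to dispense with the Noetherian assumption for rings of the form $S[x]$.

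The natural first attempt is to reduce to the Noetherian case. The finitely many coefficients $a_i$ generate a subring $S_0\subseteq S$ that is finitely generated over the prime ring, hence a Noetherian domain, and then $D_0:=S_0[x]$ is a Noetherian domain as well. It also has infinitely many maximal ideals: for any nonzero ring $R$, choosing a maximal ideal $\mathfrak m\subset R$ and pulling back the infinitely many maximal ideals of $(R/\mathfrak m)[x]$ produces infinitely many in $R[x]$. Proposition~\ref{p1} then gives $F=g+h$ with $g,h$ irreducible over $\operatorname{Frac}(S_0)(x)$. The obstacle --- and what I expect to be the crux --- is that irreducibility is measured over the fraction field and need not ascend along $\operatorname{Frac}(S_0)(x)\subseteq K$; the decomposition supplied by Proposition~\ref{p1} could factor further over the larger field $K$.

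To avoid this I would build $g$ and $h$ directly as Eisenstein polynomials at primes that already live in $D$ and define valuations on all of $K$. I use the two principal primes $(x)$ and $(x-1)$ of $S[x]$. They are comaximal, because $x-(x-1)=1$ gives $(x)+(x-1)=D$; and since $S$ is a domain the orders of vanishing along $x$ and along $x-1$ are multiplicative (the lowest-order coefficients multiply to a nonzero element of $S$), so they are genuine discrete valuations $v_0,v_1$ on $K$. Using the Chinese remainder theorem modulo the comaximal ideals $(x^2)$ and $((x-1)^2)$, I would choose $g=\sum_{i=0}^d b_iT^i$ coefficient by coefficient so that $g$ is Eisenstein for $v_0$ (namely $x\nmid b_d$, $x\mid b_i$ for $i<d$, and $x^2\nmid b_0$) while simultaneously $h:=F-g$ is Eisenstein for $v_1$ (namely $(x-1)\nmid a_d-b_d$, $(x-1)\mid a_i-b_i$ for $i<d$, and $(x-1)^2\nmid a_0-b_0$). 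For each index the two prescriptions are imposed independently modulo $(x^2)$ and $((x-1)^2)$, so comaximality makes them simultaneously solvable; the leading conditions force $b_d\ne0$ and $a_d-b_d\ne0$, so $g$ and $h$ have $T$-degree exactly $d$.

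The result then follows from the valuation form of Eisenstein's criterion: the Newton polygon of $g$ relative to $v_0$ is the single segment from $(0,1)$ to $(d,0)$, whose height drop $1$ is coprime to $d$, so $g$ is irreducible over $K$; likewise $h$ is irreducible over $K$ via $v_1$, and $g+h=F$. The only steps requiring real care are verifying the multiplicativity of $v_0$ and $v_1$ --- exactly where the hypothesis that $S$ is a domain is used --- and the Chinese-remainder bookkeeping for the coefficients. Crucially, this direct route uses neither Noetherianity nor Proposition~\ref{p1}, which is precisely what permits $S$ to be an arbitrary integral domain.
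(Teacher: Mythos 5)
You should know at the outset that the paper contains no proof of this corollary at all: it is quoted verbatim from Pollack \cite{Pollack}, immediately after Proposition \ref{p1}, with no argument supplied. So the comparison can only be with the cited source, and there your construction is in fact the natural one: Pollack's method is an Eisenstein-type construction at two comaximal primes via the Chinese remainder theorem, and the whole point of the corollary is that in $S[x]$ the principal primes $(x)$ and $(x-1)$ serve this purpose without any Noetherian hypothesis (Noetherianity in Proposition \ref{p1} is needed only to produce primes $P$ with usable elements of $P\setminus P^2$, which here are exhibited explicitly as $x$ and $x-1$). Your execution is sound: you correctly diagnose that Proposition \ref{p1} cannot be applied directly, and that the naive reduction to a finitely generated Noetherian subring $S_0$ stalls because irreducibility need not ascend from $\mathrm{Frac}(S_0)(x)$ to $\mathrm{Frac}(S)(x)$ --- though it is worth observing that for your particular witnesses the ascent \emph{does} hold, since the $x$-adic and $(x-1)$-adic valuations of $S[x]$ restrict to those of $S_0[x]$, so Eisenstein-ness is insensitive to enlarging $S$ and the reduction route could be salvaged too. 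The comaximality of $(x^2)$ and $((x-1)^2)$, the coefficientwise congruence choices, the nonvanishing of the two leading coefficients, and the one-segment Newton polygon from $(0,1)$ to $(d,0)$ (Dumas's concatenation theorem needs only a valued field, not completeness) are all correct. One caveat deserves a sentence in your write-up: you prove irreducibility over $K=\mathrm{Frac}(D)$, which matches the convention this paper announces, but Pollack's property (GC) concerns irreducible \emph{elements} of $D[x]$; for non-monic inputs your $g$ could a priori still factor as a nonunit constant of $S[x]$ times a polynomial, and ruling out such content factors would need an extra argument beyond the valuation-Eisenstein criterion.
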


When $D=\mathbb{F}_q$ is a finite fields (not that in this case the assumptions of Proposition \ref{p1} do not holds), Hayes \cite{Hayes2}, in 1966, showed an asymptotic formula to the number $R(f)$ of representation of a odd polynomial $f(x)\in \mathbb{F}_q[x]$ as a sum of three monic irreducible polynomials $g(x)$, $h(x)$ and $t(x)$ in $\mathbb{F}_q[x]$, in the form, $\alpha g(x)+\beta h(x)+\gamma t(x)=f(x)$, where  $\alpha,\beta,\gamma$ in $\mathbb{F}_q^*$ such that $\alpha+\beta+\gamma=f_r$ and $f_r$ is the leading coefficient of $f(x)$.

The asymptotic formula proved by Hayes is
\[R(f)=r^{-3}q^{2r}S(f)+O(q^{1/4}q^{(5/4+\epsilon)r}),\]
where $r=\partial f$, $S(f)$ is the singular series ( see  \cite[eq. 6.10]{Hayes2}), the constant implied in $O$ is independent of both $f$ and $q$. The number $\epsilon$ is defined as $\max\{1/2,\epsilon^*\}$, where $\epsilon^*$ is the least upper bound of the real parts of the zeros of certain $L$-functions. Advances in this direction we can cite the following papers, Pollack \cite{Pollack2, Pollack3}, Webb \cite{Webb} and Car \cite{Mireille1,Mireille2,Mireille3}.  

Following the results of Hayes and Pollack with $D=\Z[\theta]$, where $\theta$ satisfies the properties described in the Lemma \ref{L1} below, and following the ideas of Kozek \cite{Kozek} we will prove that the number $R(y)$ of representation of a monic polynomial $f(x)\in \Z[\theta][x]$ as a sum of two monic irreducible polynomials $g(x)$ and $h(x)$ in $\Z[\theta][x]$, with the coefficients of $g(x)$ and $h(x)$ bounded in complex modulus by $y$, is asymptotic to $(4y)^{2d-2}$.

\section{Notation and Preliminaries Results} \label{ns}
Some well known facts are below.
Let $f(x)=\sum_{i=0}^{d}f_{i}x^{i}$ be a polynomial in $\C[x]$, define 
$$
H(f)=\mbox{max}_{0\leq i\leq d}|f_i|\;\;\; \mbox{and} \;\;\; M(f)=\exp\left({\int_{0}^{1}} \ln|f(e^{2\pi it})|dt \right).
$$  
These expressions $H(f)$ and $M(f)$ will be known as \textit{height} and \textit{Mahler's measure} (see \cite{Mahler1, Mahler2}). Mahler showed that for $0\leq i \leq n$, $|f_i|\leq \displaystyle{n \choose i}M(f)$. Mahler also noted that $M(f)$ is multiplicative. An another results of Mahler is that 

\begin{equation}\label{Mahler1}
\frac{M(f)}{\sqrt{d+1}}\leq H(f)\leq 2^{d-1}M(f).
\end{equation}

An important property of Mahler measure was proved by Landau in \cite{Landau}. Landau showed that 

\begin{equation}\label{Landau}
1\leq M(f)\leq\left(\sum_{i=0}^{d}|f_{i}|^{2}\right)^{\frac{1}{2}}. 
\end{equation}

Now we consider $f(x)=a(x)b(x)$ such that $\partial a=d_1$ and $\partial b=d_2$, i.e., $d=d_1 + d_2$. A direct application of Jensen's formula (see \cite{Mahler1}) results in $H(f)=H(ab)\leq (1+d_1)H(a)H(b)$, if $d_1 \leq d_2$. To do this, let 
$$
f(x)=\sum_{i=0}^{d}f_{i}x^{i}, \;\;\; a(x)=\sum_{k=0}^{d_1}a_{k}x^{k} \;\;\; \mbox{and}\;\;\; b(x)=\sum_{t=0}^{d_2}b_{t}x^{t}.
$$
Then 
$$
|f_i|=|\sum_{j=0}^{d_1}a_jb_{i-j}|\leq (1+d_1)\mbox{max}|a_k|\mbox{max}|b_t|=(1+d_1)H(a)H(b).
$$
Moreover, 

\begin{equation}\label{Mahler2}
H(ab)\geq \frac{H(a)H(b)}{2^{d-2}\sqrt{d+1}}.
\end{equation}

The last inequality follows from the fact that $M(f)$ is multiplicative and from the inequality (\ref{Mahler1}) for $a(x)$ and $b(x)$, i.e., $H(a)\leq 2^{d_{1}-1}M(a)$, $H(b)\leq 2^{d_{2}-1}M(b)$ and $M(ab)\leq H(ab)\sqrt{d+1}$.

Using what has been discussed above we prove the following result.

\begin{lemma}\label{L1}
Let $f(x)=a(x)b(x)$ be a polynomial of degree $d$ in $\Z[\theta][x]$, where $a(x), b(x)\in\Z[\theta][x]$ and
\begin{equation}\label{theta}
\theta=\left\{\begin{array}{ccc}
\sqrt{-k} & \mbox{if}\;\;\;-k\not\equiv 1 \;(\mbox{mod}\;4)\\ 
	\frac{\sqrt{-k}+1}{2} & \mbox{if}\;\;\;-k\equiv 1 \;(\mbox{mod}\;4)\\ 
	 \end{array}\right.,
\end{equation}
where $k$ is a squarefree integer and $k\geq 2$.
Let $a(x)$ and $f(x)$ take the form
$$
a(x)=\sum_{i=0}^{m}a_i x^i \;\;\;\mbox{and}\;\;\; f(x)= \sum_{i=0}^{d}f_i x^i.
$$
Then for $0\leq l\leq m$, $a_l$ satisfies
$$
|a_l|\leq 2^{2d-2}\sqrt{d+1}\left(\sum_{i=0}^{d}|f_{i}|^{2}\right)^{\frac{1}{2}}.
$$
\end{lemma}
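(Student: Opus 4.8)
The plan is to reduce the bound on the single coefficient $a_l$ to a bound on the Mahler measure $M(f)$, since Landau's inequality (\ref{Landau}) already supplies exactly the right-hand side $\bigl(\sum_{i=0}^d|f_i|^2\bigr)^{1/2}$. First I would record Mahler's coefficient bound $|a_l|\le \binom{m}{l}M(a)\le 2^{m-1}M(a)$ (valid for $m\ge 1$), so that everything comes down to controlling $M(a)$ by $M(f)$. By multiplicativity of the Mahler measure one has $M(f)=M(ab)=M(a)M(b)$, whence $M(a)\le M(f)$ would follow \emph{provided} $M(b)\ge 1$. Since $M(b)$ is at least the complex modulus of the leading coefficient of $b$, the whole argument hinges on showing that every nonzero element of $\Z[\theta]$ has complex modulus at least $1$.

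This last point is the heart of the matter, and the only place where the hypotheses on $\theta$ and $k$ in (\ref{theta}) enter. Here I would use that $\Z[\theta]$ is the ring of integers of the imaginary quadratic field $\Q(\sqrt{-k})$, so that for $z\in\Z[\theta]$ the complex modulus satisfies $|z|^2=z\bar z=N(z)$, the field norm (complex conjugation being the nontrivial automorphism), which is a nonnegative rational integer; for $z\ne 0$ it is therefore at least $1$, giving $|z|\ge 1$. Concretely, when $-k\not\equiv 1\pmod 4$ one writes $z=u+v\sqrt{-k}$ with $u,v\in\Z$ and checks $|z|^2=u^2+kv^2\ge 1$ for $(u,v)\ne(0,0)$, while when $-k\equiv 1\pmod 4$ one writes $z=\tfrac{a+b\sqrt{-k}}{2}$ with $a\equiv b\pmod 2$ and checks $|z|^2=\tfrac{a^2+kb^2}{4}\ge 1$ (using $k\ge 3$ in this case). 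I expect this elementary but indispensable estimate — equivalently, that $\Z[\theta]$ has no nonzero element inside the open unit disc — to be the main obstacle, in the sense that it is the sole arithmetic input distinguishing this setting from an arbitrary subring of $\C$, where $M(b)\ge 1$ can fail.

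With $|z|\ge 1$ in hand, the leading coefficient of the nonzero polynomial $b$ has modulus at least $1$, so $M(b)\ge 1$ (and likewise $H(b)\ge 1$). Assembling the chain gives
\[
|a_l|\le 2^{m-1}M(a)\le 2^{m-1}M(a)M(b)=2^{m-1}M(f)\le 2^{m-1}\Bigl(\sum_{i=0}^{d}|f_i|^2\Bigr)^{1/2},
\]
and since $m\le d$ we have $2^{m-1}\le 2^{d-1}\le 2^{2d-2}\sqrt{d+1}$, which yields the stated (deliberately generous) bound; the degenerate case $m=0$ is immediate since then $\binom{m}{l}=1$ and $|a_l|=|f_l/b_0|\le|f_l|$. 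Alternatively — and this is presumably the route reflected in the precise constant — one runs the argument through (\ref{Mahler2}): from $H(f)\ge H(a)H(b)/(2^{d-2}\sqrt{d+1})$ and $H(b)\ge 1$ one gets $H(a)\le 2^{d-2}\sqrt{d+1}\,H(f)$, and then bounding $H(f)\le 2^{d-1}M(f)\le 2^{d-1}\bigl(\sum_{i=0}^d|f_i|^2\bigr)^{1/2}$ via (\ref{Mahler1}) and (\ref{Landau}) lands within $2^{2d-2}\sqrt{d+1}\bigl(\sum_{i=0}^d|f_i|^2\bigr)^{1/2}$, the factor $\sqrt{d+1}$ now appearing naturally; in either route the modulus estimate $H(b)\ge 1$ is the one nontrivial ingredient.
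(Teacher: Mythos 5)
Your proposal is correct and takes essentially the same approach as the paper: the single arithmetic input is a lower bound on the complex modulus of nonzero elements of $\Z[\theta]$ (you get $\geq 1$ via the field norm; the paper settles for $\geq \frac{1}{2}$ by direct computation), combined with multiplicativity of the Mahler measure and Landau's inequality (\ref{Landau}). Indeed, your ``alternative'' route through (\ref{Mahler1}) and (\ref{Mahler2}) with $H(b)$ bounded below is exactly the paper's proof, while your primary route via Mahler's coefficient bound $|a_l|\leq\binom{m}{l}M(a)$ is a harmless variant that even yields the sharper constant $2^{d-1}$ in place of the stated $2^{2d-2}\sqrt{d+1}$.
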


\begin{proof}
First, we observe that $|r+s\theta|\geq \frac{1}{2}$, for any $\theta$ above and $r,s\in \Z$. Actually, 

$$
|r+s\theta|=\left\{\begin{array}{ccc}
\sqrt{r^2 +s^2k} & \mbox{if}\;\;\;-k\not\equiv 1 \;(\mbox{mod}\;4)\\ 
	\frac{1}{2}\sqrt{r_{1}^{2} +s^2k}  & \mbox{if}\;\;\;-k\equiv 1 \;(\mbox{mod}\;4)\\ 
	 \end{array}\right.,
$$
where $r_1=2r+s$. But, since $r$ and $s$ are integers and $k > 1$ we have that $\sqrt{r^2 +s^2k}\geq 1$ and $\sqrt{r_{1}^{2} +s^2k}\geq 1$.

From the inequalities (\ref{Mahler1}) and (\ref{Mahler2}) we have 

$$
H(a)H(b)\leq 2^{2d-3}\sqrt{d+1}M(f).
$$
Now, using the inequality (\ref{Landau}) follows that
$$
\frac{1}{2}|a_l|\leq \frac{1}{2}H(a)\leq H(a)H(b)\leq 2^{2d-3}\sqrt{d+1}\left(\sum_{i=0}^{d}|f_{i}|^{2}\right)^{\frac{1}{2}}.
$$
Consequently
$$
|a_l|\leq 2^{2d-2}\sqrt{d+1}\left(\sum_{i=0}^{d}|f_{i}|^{2}\right)^{\frac{1}{2}}.
$$
\end{proof}

Before we describe the next lemma, we will want to define $\mbox{\textit{O}}$-notation. For two
functions $r(y)$ and $\phi(y)$, we will write 
$$r(y) = \mbox{\textit{O}}(\phi(y))$$
as $y \rightarrow\infty$ if and only if there exists a $y_0$ and a $C > 0$ such that
$$|r(y)| \leq C\phi(y)$$
for all $y > y_0$. In the event that a constant $C$ depends only on a value $s$, we will write
$$|r(y)| \leq C_s\phi(y),$$
and also
$$r(y) = \mbox{\textit{O}}_s(\phi(y)).$$
In case that a constant $C$ depends on the coefficients and degree of a polynomial $f(x)$, we use instead $\mbox{\textit{O}}_f$.
\begin{lemma}\label{L2}
Let $d>1$ be an integer and $g_{d-1}\in\Z[\theta]$ is fixed, $\theta$ as in Lemma \ref{L1}. For each $y\geq 2$, let $r_y$ denote the number of $d$-tuples $(g_{d-1}, g_{d-2}, \dots, g_1, g_0)$ of elements in $\Z[\theta]$ satisfying $|g_i| \leq y$ for $i\in \left\{0, 1, \dots, d-1\right\}$ such that the polynomial 
$$
g(x)=\sum_{i=0}^{d-1}g_{i}x^{i} + x^{d}
$$
is reducible. Then $r_y=\mbox{\textit{O}}_g(y^{2d-4}\ln y)$. In particular, $r_y=0$ if $y<|g_{d-1}|$.

\end{lemma}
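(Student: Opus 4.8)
The plan is to bound the number of reducible $g$ by the number of their monic factorizations and to extract the logarithm from a lattice sum. Since the hypotheses on $k$ and $\theta$ make $\Z[\theta]$ the full ring of integers of $\Q(\sqrt{-k})$, it is integrally closed; hence a monic $g$ that is reducible over $\Q(\sqrt{-k})$ admits a factorization $g=a(x)b(x)$ with $a,b\in\Z[\theta][x]$ \emph{monic} and $1\le d_1:=\partial a\le \partial b=:d_2$, $d_1+d_2=d$. (The elementary symmetric functions of any subset of the roots are integral over $\Z[\theta]$ and lie in $\Q(\sqrt{-k})$, hence in $\Z[\theta]$; no unique factorization is needed, only integral closedness.) Therefore
\[ r_y \le \sum_{d_1=1}^{\lfloor d/2\rfloor} N_{d_1}(y), \]
where $N_{d_1}(y)$ counts the pairs $(a,b)$ of monic polynomials with $\partial a=d_1$ and $g=ab$ satisfying $|g_i|\le y$ for all $i$ and the prescribed value of $g_{d-1}$. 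By Lemma \ref{L1} applied to each factor, every coefficient of $a$ and of $b$ is bounded in modulus by $C_d:=2^{2d-2}(d+1)$ times $y$, so each such coefficient ranges over $O(y^2)$ points of the rank-two lattice $\Z[\theta]$.

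Next I would exploit the two relations that actually cut the dimension. Writing $a=x^{d_1}+\cdots+a_0$ and $b=x^{d_2}+\cdots+b_0$, the extreme coefficients of $g$ give
\[ g_{d-1}=a_{d_1-1}+b_{d_2-1}\ (\text{fixed}),\qquad g_0=a_0b_0,\quad |g_0|\le y. \]
For a fixed nonzero $a_0$ (recall $|a_0|\ge\tfrac12$ for nonzero elements, as in the proof of Lemma \ref{L1}), the identity $g_0=a_0b_0$ forces $|b_0|\le y/|a_0|$, so $b_0$ ranges over $O(y^2/|a_0|^2)$ lattice points; the relation for $g_{d-1}$ determines $b_{d_2-1}$ once $a_{d_1-1}$ is chosen; and the remaining $d-3$ free coefficients each contribute $O(y^2)$. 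Hence, for $d\ge 3$,
\[ N_{d_1}(y)\ll_d \sum_{0\neq|a_0|\le C_d y}\frac{y^2}{|a_0|^2}\,y^{2(d-3)} = y^{2d-4}\sum_{0\neq|a_0|\le C_d y}\frac{1}{|a_0|^2}, \]
where the degenerate stratum $a_0=0$ (which forces $g_0=0$, losing one full factor $y^2$) contributes only $O(y^{2d-4})$ and is absorbed.

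The logarithm now comes from the lattice sum: decomposing $\Z[\theta]\setminus\{0\}$ into dyadic annuli $2^{j}\le|a_0|<2^{j+1}$, each annulus carries $O(2^{2j})$ points and contributes $O(2^{2j}\cdot 2^{-2j})=O(1)$, and there are $O(\ln y)$ relevant values of $j$; thus $\sum_{0\neq|a_0|\le C_d y}|a_0|^{-2}=O(\ln y)$. Summing the resulting $O(y^{2d-4}\ln y)$ bound over the finitely many shapes $d_1=1,\dots,\lfloor d/2\rfloor$ gives $r_y=\mbox{\textit{O}}_g(y^{2d-4}\ln y)$. The final clause is immediate and independent of reducibility: every tuple counted must obey $|g_{d-1}|\le y$, so if $y<|g_{d-1}|$ there are no admissible tuples at all and $r_y=0$.

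I expect the main obstacle to be organizing the dimension count so that exactly one factor $y^2$ is saved by the linear relation for $g_{d-1}$ while the logarithmic saving is extracted from the multiplicative relation $g_0=a_0b_0$; the whole point is that these two coefficients, together with Lemma \ref{L1} controlling the rest, already force the savings over the trivial $O(y^{2d-2})$ total. Two technical points deserve care: the integral-closedness argument guaranteeing \emph{monic} factors inside $\Z[\theta][x]$ (so that Lemma \ref{L1} applies verbatim), and the degenerate strata $a_0=0$ together with the endpoint $d=2$, where $2d-4$ vanishes and the genuine size is the weaker $O(y)$ coming directly from $|a_0b_0|\le y$ with $b_0$ pinned by $g_{d-1}$, so the stated bound should be read as effective for $d\ge 3$.
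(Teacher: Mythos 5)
Your proof is correct for $d\ge 3$ and follows the same skeleton as the paper's: factor $g=ab$ into monic polynomials, bound every coefficient of the factors by $O_d(y)$ via Lemma \ref{L1}, let the fixed $g_{d-1}=a_{d_1-1}+b_{d_2-1}$ save one factor of $y^2$, and extract the logarithm from $g_0=a_0b_0$ with $|g_0|\le y$, treating the $g_0=0$ stratum separately as $O(y^{2d-4})$. You diverge, usefully, at exactly the two steps the paper leaves rough. First, you justify that the monic factors can be taken in $\Z[\theta][x]$ by integral closedness (under the stated congruence conditions $\Z[\theta]$ is the full ring of integers of $\Q(\sqrt{-k})$); the paper simply asserts the factorization. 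Second, you get the $(a_0,b_0)$ count from the lattice sum $\sum_{0\neq |a_0|\le C_d y}|a_0|^{-2}=O(\ln y)$ via dyadic annuli, whereas the paper converts to integer norms and runs a divisor-style double sum prefixed by a factor $16$ ``for signs''; your version is cleaner and in fact repairs a defect there, since the number of elements of $\Z[\theta]$ of a given norm is not bounded by a fixed sign count (e.g.\ $r_1^2+3s^2=4$ has six solutions $(r_1,s)$), although the $O(y^2\ln y)$ conclusion is correct either way. Your bookkeeping is also uniform in the case $d_1=1$, where $a_{d_1-1}=a_0$ and the two key relations involve overlapping coefficients; the paper's tuple count $O(y^{2m-4}y^{2n-4})$ does not literally apply when a factor is linear (the exponent $2n-4$ is negative and $b_{n-1}=b_0$ is used twice), though its final bound survives. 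Finally, your caveat about $d=2$ is a genuine observation rather than a defect of your argument: the lemma as stated includes $d=2$, where $O(y^{2d-4}\ln y)=O(\ln y)$ is false --- with $g_{d-1}$ fixed the reducible count is of true order $y$, exactly as your computation $|a_0(g_{d-1}-a_0)|\le y$ shows --- but this does not harm the main theorem, since $y=o\left(y^{2d-2}\right)$ there.
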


\begin{proof}
Let $g(x) \in \mathbb{Z}[\theta](x)$ be a reducible, monic polynomial of degree $d>1$ such that all of its coefficients are $\leq y$ in complex modulus and $g_{d-1}$ is fixed as in the lemma. Then exists two monic polynomials $a(x)$ and $b(x) \in \mathbb{Z}[\theta][x]$ of degree $\geq 1$ such that $g(x)=a(x)b(x)$. Let us further take
\[\deg(a) = m \geq n=\deg(b),\]
where $m+n=d$. We write $a(x)$ and $b(x)$ in the following forms:
\[a(x)=x^m+a_{m-1}x^{m-1}+ ... + a_1x + a_0\]
\[b(x)=x^n+b_{n-1}x^{n-1}+ ... + b_1x + b_0.\]
We assert that the number of monic polynomials that we are considering with $g_0=0$ is $\mbox{\textit{O}}_{g}(y^{2d-4})$. Indeed, denoting by $g_j=g_{j,1}+i\sqrt{k}g_{j,2}$ and as $|g_i| \leq y$  we have that $|g_j|=\sqrt{g_{j,1}^2+kg_{j,2}^2}\leq y$. Therefore, the number of possibilities for $g_j$ is bounded by

\begin{equation}\label{eq1}
	(2y+1)(2\frac{y}{\sqrt{k}}+1) - 2\frac{y}{\sqrt{k}} - \left[2\frac{y}{\sqrt{k}} - 2\frac{\sqrt{2y-1}}{\sqrt{k}}\right] -...- \left[2\frac{y}{\sqrt{k}} - 2\frac{\sqrt{y^2-1}}{\sqrt{k}}\right],
\end{equation}
with the sum having $y$ terms, i.e., the term (\ref{eq1}) is $\mbox{\textit{O}}_{k}(y^{2})$ and the assertion follows. 

By the argument above it is sufficient to show that the number of $d$-tuples
\[(a_{m-1},a_{m-2},...,a_1,a_0,b_{n-1}, b_{n-2},..., b_1,b_0)\] 
as above, with $a_0b_0\neq 0$, is equal to $\mbox{\textit{O}}_{g}(y^{2d-4}\ln{y})$.

We consider $a(x)$ which has degree $m\leq d-1$. A similar argument applies to $b(x)$. For $1\leq l \leq m-1$, Lemma \ref{L1} implies 
$$
|a_l|\leq 2^{2d-2}\sqrt{d+1}\left(\sum_{i=0}^{d}|g_{i}|^{2}\right)^{\frac{1}{2}}\leq 2^{2d-2}\sqrt{d+1}\left((d+1)y^{2}\right)^{\frac{1}{2}}=C_dy,
$$
where $C_d$ depends only $d$. Thus, the number of $(d-4)$-tuples
\[(a_{m-2},...,a_1, b_{n-2},..., b_1)\]
is $\mbox{\textit{O}}_{g}(y^{2m-4}y^{2n-4})=\mbox{\textit{O}}_{g}(y^{2d-8})$.

Observe that when we multiply $a(x)$ and $b(x)$, since they are both monic polynomials, the value the coefficient $g_{d-1}$ is the sum $a_{m-1} + b_{n-1}$. Also, recall that $g_{d-1}$ is fixed, so determining $a_{m-1}$ also determines $b_{n-1}$. Hence, the number of $2$-tuples $(a_{m-1}, b_{n-1})$ is $\mbox{\textit{O}}_{k}(y^{2})$.

Since $a_0b_0=g_0$, we have $1\leq |a_0b_0|\leq y$, i.e., 
\[1\leq (a_{0,1}^2+ka_{0,2}^2)(b_{0,1}^2+kb_{0,2}^2)\leq y^2.\]
Thus, the number of $2$-tuples $(a_0,b_0)$ is bounded by
\[
\begin{array}{rcl}
\displaystyle 16\sum_{q\leq y^2}\sum_{\delta|y^2}1 &=&\displaystyle 16\sum_{\delta \leq y^2}\sum_{ \stackrel{q \leq y^2}{\delta|q}}1\\
\displaystyle &\leq&\displaystyle 16\sum_{\delta \leq y^2}\frac{y^2}{\delta} \\
\displaystyle &\leq&\displaystyle 16y^2\sum_{\delta \leq y^2}\frac{1}{\delta} \\
\displaystyle &\leq&\displaystyle 16y^2\left( 1 + \int_1^{y^2}\frac{1}{t}dt\right)\\
\displaystyle &=&\displaystyle \mbox{\textit{O}}(y^2\ln{y^2})= \mbox{\textit{O}}(y^2\ln{y}), 
\end{array}
\]
where the $16$ appears above since each term of $a_0$ and $b_0$ may be either positive  or negative.

Finally, for an integer $d>1$ and a fixed $g_{d-1}\in\Z[\theta]$, the number of $d$-tuples
\[(a_0, a_1, . . . , a_{m-1}, b_0, b_1, . . . , b_{n-1}),\]
corresponding to the coefficients of two monic polynomials $a(x)$ and $b(x)$ in $\Z[\theta][x]$ of degrees
$m$, $n\geq 1$ such that $g(x) = a(x)b(x)$ and the coefficients of $g(x)$ are bounded in absolute value by $y$, as $y \longrightarrow \infty$ is:
\[
\begin{array}{c}
\displaystyle r_y =\mbox{\textit{O}}_{g}(y^{2m-4}y^{2n-4}y^2(\ln{y})y^2) = \mbox{\textit{O}}_{g}(y^{2d-4}\ln{y}),
\end{array}
\]
with the constants depending only on $d$ and $k$.

\end{proof}

\begin{remark}\label{rem0}
If we substitute (\ref{theta}) by
\[
\theta=\left\{\begin{array}{ccc}
\sqrt{k} & \mbox{if}\;\;\;k\not\equiv 1 \;(\mbox{mod}\;4)\\ 
	\frac{\sqrt{k}+1}{2} & \mbox{if}\;\;\;k\equiv 1 \;(\mbox{mod}\;4)\\ 
	 \end{array}\right.,
	\]
	where $k\geq 2$ is a squarefree integer and we consider $\Z[\theta]$ with the usual norm induced by $\Q$, our argument can not be applied because, for $y$ large enough, there are endless possibilities for $g_j=g_{j,1}+\sqrt{k}g_{j,2}$ with $g_{j,1}, g_{j,2} \in \Z$ and satisfying $|g_i| \leq y$. Therefore, we could not get the limitation (\ref{eq1}). 
\end{remark}

\begin{remark}\label{rem1}
If we remove  the condition in Lemma \ref{L2} that $g_{d-1}$ is fixed, then $r_y =\mbox{\textit{O}}_{g}(y^{2d-2}\ln{y})$. This is a direct consequence of the same arguments used to prove (\ref{eq1}). If degree of $g$ is $d-1$ and in this case $g_{d-2}$ is not fixed, then $r_y =\mbox{\textit{O}}_{g}(y^{2d-4}\ln{y})$.
\end{remark}

\begin{lemma}\label{L3}
Let $f(x)$ be a monic polynomial in $\Z[\theta][x]$ of degree $d>1$, such that $f(x)=g(x)+h(x)$, where $g(x)$ or $h(x)$ is a reducible monic polynomial such that $\partial g=d$ and $1\leq\partial h\leq d-1$. Moreover, the coefficients of $g(x)$ and $h(x)$ bounded in complex modulus by $y$. Then the number of pairs $(g(x),h(x))$ where at least one $g(x)$ or $h(x)$ is a reducible monic polynomial is $\mbox{\textit{O}}_{f}(y^{2d-4}\ln y)$.

\end{lemma}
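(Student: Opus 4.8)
The plan is to reduce everything to Lemma \ref{L2} and Remark \ref{rem1}, exploiting the fact that since $f$ is fixed, the relation $f=g+h$ forces $h=f-g$, so that a pair $(g,h)$ is determined by $g$ alone. First I would record the structural consequence of monicity: since both $f$ and $g$ are monic of degree $d$, the difference $h=f-g$ has degree $n\leq d-1$, and once the degree $n=\partial h$ is fixed, the top coefficients of $g$ are completely determined by $f$. Indeed $g_j=f_j$ for $n<j\leq d-1$ and $g_n=f_n-1$; in particular $g_{d-1}$ is a fixed element of $\Z[\theta]$ (namely $f_{d-1}$ when $n<d-1$ and $f_{d-1}-1$ when $n=d-1$). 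Since $n$ ranges over the finite set $\{1,\ldots,d-1\}$, it suffices to bound, for each fixed $n$, the number of admissible $g$ and then to sum over $n$.

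Next I would split the count according to which factor is reducible, bounding the cardinality of the union by the sum over the two cases. In the case where $g$ is reducible, $g$ is a reducible monic polynomial of degree $d$ with coefficients bounded in complex modulus by $y$ and with $g_{d-1}$ fixed by the previous paragraph; hence Lemma \ref{L2} applies directly and gives $\mbox{\textit{O}}_{g}(y^{2d-4}\ln y)$ possibilities for each value of $n$. In the case where $h$ is reducible, necessarily $\partial h=n\geq 2$, and $h=f-g$ is a reducible monic polynomial of degree $n\leq d-1$ whose coefficients are bounded by $y$. Its subleading coefficient $h_{n-1}=f_{n-1}-g_{n-1}$ is \emph{not} fixed, because $g_{n-1}$ is free, so here I would invoke Remark \ref{rem1} (the degree-$n$ estimate without a fixed subleading coefficient) rather than Lemma \ref{L2}, obtaining $\mbox{\textit{O}}_{g}(y^{2n-2}\ln y)$ possibilities; since $n\leq d-1$ this is $\mbox{\textit{O}}_{f}(y^{2d-4}\ln y)$.

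Summing over the at most $d-1$ admissible values of $n$ in each case affects only the implied constant, and adding the two cases yields the claimed bound $\mbox{\textit{O}}_{f}(y^{2d-4}\ln y)$ on the number of pairs in which at least one of $g$ or $h$ is reducible.

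The only real subtlety I anticipate is the bookkeeping of which coefficients are pinned down: one must check that fixing $\partial h$ indeed fixes $g_{d-1}$ (so that Lemma \ref{L2} is legitimately applicable when $g$ is the reducible factor) while it simultaneously leaves $h_{n-1}$ free (so that the unrestricted estimate of Remark \ref{rem1}, and not Lemma \ref{L2}, governs the case in which $h$ is the reducible factor). Once this distinction is sorted out, the argument is a direct application of the two preceding results.
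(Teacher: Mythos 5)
Your proposal is correct and takes essentially the same route as the paper's proof: split according to whether $g$ or $h$ is the reducible one, apply Lemma \ref{L2} to $g$ (legitimate because $g_{d-1}$ is pinned to $f_{d-1}$ or $f_{d-1}-1$), and apply Remark \ref{rem1} to $h$, whose subleading coefficient is not fixed. Your bookkeeping is in fact a bit more careful than the paper's --- fixing $n=\partial h$ first so that ``$g_{d-1}$ is fixed'' means a single value per case, noting that a reducible monic $h$ forces $n\geq 2$, and summing explicitly over the finitely many $n$ --- but the substance is identical.
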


\begin{proof}
First,  we write
\begin{equation}\label{eq4}
f(x)=x^{d} + \sum_{j=0}^{d-1}f_{j}x^{j},\, g(x)=x^{d} + \sum_{j=0}^{d-1}g_{j}x^{j} \,\, \mbox{and}\,\, h(x)=x^{n} + \sum_{j=0}^{n-1}h_{j}x^{j}, 
\end{equation}
where $f_{j}=g_{j}+h_{j}$ and $1\leq n \leq d-1$.

Now, we consider a pairs of monic polynomials $(g(x), h(x))$ where at least one of $g(x)$ or $h(x)$ is reducible. Once a particular $g(x)$ or $h(x)$ is fixed, it determines the other. Thus, we can count separately when $g(x)$ is reducible and when $h(x)$ is reducible. We count the ways that $g(x)$ might be reducible. Since $f(x)=g(x)+h(x)$, by (\ref{eq4}), we have that either $g_{d-1}=f_{d-1}$ or $g_{d-1}=f_{d-1}-1$. Therefore, in any case $g_{d-1}$ is fixed. Now, the coefficients of $g$ are bounded in complex modulus by $y$, and thus by Lemma \ref{L2}, we have that the number of monic reducible polynomials $g(x)$ is $\mbox{\textit{O}}_{f}(y^{2d-4}\ln y)$. Now, we count the ways that $g(x)$ might be reducible. If $\partial h=d-1$, by Remark \ref{rem1}, we have that the number of monic reducible polynomials $h(x)$ is $\mbox{\textit{O}}_{f}(y^{2d-4}\ln y)$ and if $\partial h< d-1$, then the amount of reducible $h(x)$ is smaller, also by Remark \ref{rem1}.

\end{proof}

With the results above we can prove the main result.
\section{Main Result}
\begin{theorem}\label{main}
Let $f(x)$ be a monic polynomial in $\Z[\theta][x]$ of degree $d>1$, $\theta$ as in Lemma \ref{L1}. The number $R(y)$ of representation of $f(x)$ as a sum of two monic irreducible $g(x)$ and $h(x)$ in $\Z[\theta][x]$, with the coefficients of $g(x)$ and $h(x)$ bounded in complex modulus by $y$, is asymptotic to $(4y)^{2d-2}$. 
\end{theorem}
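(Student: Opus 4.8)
The plan is to write $R(y)$ as the number of \emph{all} representations $f=g+h$ by two monic polynomials, minus those representations in which at least one summand is reducible; I would then show the first quantity is asymptotic to $(4y)^{2d-2}$ and invoke Lemma \ref{L3} to discard the second as a lower-order error. First I reduce to counting free coefficients. Since $f$ is monic of degree $d$ and $g,h$ are monic, the two summands cannot both have degree $d$ (that would force the leading coefficient of $f$ to equal $2$); hence exactly one summand, say $g$, has degree $d$, and the other, $h$, has some degree $n$ with $1\le n\le d-1$. Writing $f=x^{d}+\sum f_j x^j$, $g=x^{d}+\sum g_j x^j$ and $h=x^{n}+\sum h_j x^j$ and matching coefficients forces $g_j=f_j$ for $n<j\le d-1$, $g_n=f_n-1$, and $h_j=f_j-g_j$ for $0\le j\le n-1$. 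Thus a representation with $\deg h=n$ is completely determined by the free coefficients $(g_0,\dots,g_{n-1})\in\Z[\theta]^{n}$, subject to the two constraints $|g_j|\le y$ and $|f_j-g_j|=|h_j|\le y$ for each $j$.

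Next I count these tuples. For fixed $j$ the admissible values of $g_j$ are the points of the lattice $\Z[\theta]$ lying in the intersection of the disk $|z|\le y$ with the disk $|z-f_j|\le y$. Because $f_j$ is a fixed constant, this intersection differs from the full disk $|z|\le y$ only by a region of area $O(y)$, so by the same lattice-point count as in Lemma \ref{L2} (equation (\ref{eq1})) the number of admissible $g_j$ is $(4y)^{2}+O(y)$. Multiplying over the $n$ free coefficients gives $(4y)^{2n}\bigl(1+O(1/y)\bigr)$ representations with $\deg h=n$; summing over $1\le n\le d-1$, the term $n=d-1$ dominates and the total number of monic representations is $(4y)^{2d-2}+O(y^{2d-3})$. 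Finally, Lemma \ref{L3} bounds the number of representations in which $g$ or $h$ is reducible by $\mbox{\textit{O}}_{f}(y^{2d-4}\ln y)$, which is of strictly smaller order; subtracting yields $R(y)=(4y)^{2d-2}+O(y^{2d-3})$, and dividing by $(4y)^{2d-2}$ gives $\lim_{y\to\infty}R(y)/(4y)^{2d-2}=1$.

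I expect the main obstacle to be the lattice-point estimate in the middle step: one must show that the number of $\Z[\theta]$-points in the intersection of the two disks of radius $y$ equals $(4y)^{2}+O(y)$, i.e. that the extra constraint $|h_j|\le y$ coming from $h$ does not alter the leading constant, and that this error is uniform enough to survive the product over the $n$ coefficients and the summation over $n$ while staying below the $(4y)^{2d-2}$ main term. Controlling this Gauss-circle-type boundary error for the lattice $\Z[\theta]$ is the delicate point; by contrast the reducible representations require no further work here, since Lemma \ref{L3} already places them at order $y^{2d-4}\ln y$, comfortably below the main term.
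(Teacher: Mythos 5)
Your skeleton is exactly the paper's: write $R(y)$ as the count of all monic pairs $(g,h)$ with $f=g+h$, note that exactly one summand has degree $d$ and the forced coefficients $g_j=f_j$ ($n<j\le d-1$), $g_n=f_n-1$ leave the lower coefficients of $g$ free, and then discard the pairs with a reducible summand via Lemma \ref{L3}; the term with $\partial h=d-1$ dominates. The divergence, and the genuine gap, is in your middle counting step. Your claim that the number of $\Z[\theta]$-points in $\{|z|\le y\}\cap\{|z-f_j|\le y\}$ is $(4y)^{2}+O(y)$ is false, and equation (\ref{eq1}) cannot support it: that display is only an \emph{upper bound} of order $O_{k}(y^{2})$, used in Lemma \ref{L2} to bound the number of reducibles, not an asymptotic with an identified constant. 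The correct asymptotic count is (area of the lens)/(covolume of the lattice): the lens has area $\pi y^{2}+O_{f}(y)$, while $\Z[\theta]$ has covolume $\sqrt{k}$ when $-k\not\equiv 1 \ (\mathrm{mod}\ 4)$ and $\sqrt{k}/2$ otherwise, so the number of admissible $g_j$ is $\pi y^{2}/\sqrt{k}+O_{f}(y)$ (respectively $2\pi y^{2}/\sqrt{k}+O_{f}(y)$). Since $\pi/\sqrt{k}<16$ for every admissible $k$, your per-coefficient constant $16y^{2}$ is not merely unproven but wrong, and no refinement of the Gauss-circle boundary error can repair it: the disk-intersection route produces a main term $c_k^{\,d-1}y^{2d-2}$ with $c_k$ depending on $k$, not $(4y)^{2d-2}$.

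For contrast, the paper never counts lattice points in disks at this stage. It counts the two integer coordinates of each free coefficient separately: writing $f_j=f_{j,1}+i\sqrt{k}f_{j,2}$, the number of choices of $g_{j,1}$ with both $|g_{j,1}|\le y$ and $|f_{j,1}-g_{j,1}|\le y$ is $2\lfloor y\rfloor+1-|f_{j,1}|$, likewise for $g_{j,2}$, and the total over all degrees of $h$ is the sum $\sum_{S=0}^{d-2}\prod_{j=0}^{S}(2\lfloor y\rfloor+1-|f_{j,1}|)(2\lfloor y\rfloor+1-|f_{j,2}|)$, which the paper identifies with $(4y)^{2d-2}+\mbox{\textit{O}}_{f}(y^{2d-4})$. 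In other words, the constant there arises from a box count on the coordinates $(g_{j,1},g_{j,2})$, not from the disk $|g_j|\le y$; your honest disk computation, once the constant is corrected to $\pi/\sqrt{k}$, actually shows that the two normalizations give different leading constants, so you cannot reach the theorem's stated main term by the route you chose. To align with the paper you would need to replace your lens count by the coordinatewise count above; as written, the proposal does not prove the asymptotic $(4y)^{2d-2}$.
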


\begin{proof}
Let $f(x) \in \mathbb{Z}[\theta][x]$ be a given monic polynomial of degree $d>1$ that takes the form
$$
x^{d} + \sum_{j=0}^{d-1}f_{j}x^{j}, 
$$
where $f_{j}=f_{j,1}+i\sqrt{k}f_{j,2}$. We are looking for pairs of monic polynomials $g(x), h(x)\in\Z[\theta][x]$ such that $f(x)=g(x)+h(x)$ and the coefficients of $g(x)$ and $h(x)$ are bounded in complex modulus by $y$. Without loss of generality, let $\partial(g)>\partial(h)$, and observe that $\partial(g)=d$ and $1\leq\partial(h)\leq d-1$. In this case,

$$
g(x)=x^{d} + \sum_{j=0}^{d-1}g_{j}x^{j} \,\, \mbox{and}\,\, h(x)=x^{n} + \sum_{j=0}^{n-1}h_{j}x^{j}, 
$$
where $g_{j}=g_{j,1}+i\sqrt{k}g_{j,2}$, $h_{j}=h_{j,1}+i\sqrt{k}h_{j,2}$, $f_{j}=g_{j}+h_{j}$ and $1\leq n \leq d-1$.

If $y\geq 1+\{|f_{0}|, |f_{1}|, \cdots, |f_{d-1}|\}$, then the total number of pairs of monic (not necessarily irreducible) polynomials $g(x), h(x)$ is

$$
\sum_{S=0}^{d-2}\prod_{j=0}^{S}(2\left\lfloor y\right\rfloor+1-|f_{j,1}|)(2\left\lfloor y\right\rfloor+1-|f_{j,2}|)\\
= (4y)^{2d-2} + \mbox{\textit{O}}_{f}(y^{2d-4}),
$$
since $f_{j,1}+i\sqrt{k}f_{j,2}=g_{j,1}+ h_{j,1} +i\sqrt{k}(g_{j,2}+h_{j,2})$.

By Lemma \ref{L3} almost all of these pairs of monic polynomials $g(x)$, $h(x)$ are irreducible. In fact, the number of pairs $(g(x),h(x))$ where at least one $g(x)$ or $h(x)$ is a reducible monic polynomial is $\mbox{\textit{O}}_{f}(y^{2d-4}\ln y)$. Thus,
\[
\begin{array}{rcl}
\displaystyle R(y) & = &\displaystyle \sum_{S=0}^{d-2}\prod_{j=0}^{S}(2\left\lfloor y\right\rfloor+1-|f_{j,1}|)(2\left\lfloor y\right\rfloor+1-|f_{j,2}|)+\mbox{\textit{O}}_{f}(y^{2d-4}\ln y)\\
\displaystyle      & = &\displaystyle ((4y)^{2d-2}+\mbox{\textit{O}}_{f}(y^{2d-4}\ln y))+\mbox{\textit{O}}_{f}(y^{2d-4}\ln y) \\
\displaystyle     & = &\displaystyle (4y)^{2d-2}+\mbox{\textit{O}}_{f}(y^{2d-4}\ln y), 
\end{array}
\] 
where we have used that any constant depending only on the coefficients and degree of $f(x)$ is small compared to $\ln y$ when $y$ is sufficiently large. Therefore,

$$
R(y)\sim (4y)^{2d-2}.
$$

\end{proof}

%
%

\vspace{2cc}



\begin{thebibliography}{99}




\bibitem{Dubickas} {\small {\sc A. Dubickas:} \textit{Polynomials expressible by sums of monic integer irreducible polynomials}. Bull. Math. Soc. Sci. Math. Roumanie, Tome {\bf54 (102)} (2011), no. 1,  65-81.}

\bibitem{Dubickas2} {\small {\sc A. Dubickas:} \textit{Linear Forms in Monic Integer Polynomials}. Canad. Math. Bull. Vol. {\bf56 (3)}, (2013)  510-519.}

\bibitem{Hayes1} {\small {\sc D. R. Hayes:} \textit{A Goldbach theorem for polynomials with integral coefficients}. Amer. Math. Monthly, {\bf72} (1965) 45-46.}

\bibitem{Hayes2} {\small {\sc D. R. Hayes:} \textit{The expression of a polynomial as a sum of three irreducibles}. Acta Arith., {\bf11} (1966) 461-488.}

\bibitem{Kozek} {\small {\sc M. Kozek:} \textit{An asymptotic formula for Goldbach's conjecture with monic polynomials in $\Z[x]$}. Amer. Math.
Monthly, {\bf117} (2010) 365-369.}

\bibitem{Landau} {\small {\sc E. Landau:} \textit{Sur quelques theor\`emes de M. Petrovich ralatifs aux zeros des fonctions analytiques}. Bull. Soc. Math. France, {\bf 33} (1905) 251-261.}

\bibitem{Mahler1} {\small {\sc K. Mahler:} \textit{An application of Jensen's formula to polynomials}. Mathematika, {\bf7} (1960), 98-100.}

\bibitem{Mahler2} {\small {\sc K. Mahler:} \textit{On some inequalities for polynomials in several variables}. J. London Math. Soc., {\bf 37} (1962) 341-344.}

\bibitem{Mireille1} {\small {\sc M. Car:} \textit{Le probl\`eme de Goldbach pour l'anneau des polyn\^omes sur un corps fini. (French)}. C. R. Acad. Sci. Paris S\'er. A-B, {\bf273} (1971), A201-A204.}

 \bibitem{Mireille2} {\small {\sc M. Car:} \textit{Sommes de carr\'es de polyn\^omes irr\'eductibles dans $\mathbb{F}_q[X]$. (French) [Sums of squares of irreducible polynomials in $\mathbb{F}_q[X]$]}. Acta Arith., {\bf44} (1984), no. 4, 307-321.}

 \bibitem{Mireille3} {\small {\sc M. Car:} \textit{Sommes de carr\'es et d'irr\'eductibles dans Fq[X]. (French) [Sums of squares and irreducibles in $\mathbb{F}_q[X]$]}. Ann. Fac. Sci. Toulouse Math., (5) {\bf3} (1981), no. 2, 129–166.}

\bibitem{Pollack2} {\small {\sc P. Pollack:} \textit{Irreducible polynomials with several prescribed coefficients}. Finite Fields Appl., {\bf 22} (2013),  70-78.}

\bibitem{Pollack} {\small {\sc P. Pollack:} \textit{On polynomial rings with a Goldbach property}. Amer. Math. Monthly, {\bf118} (2011), 71-77.}

\bibitem{Pollack3} {\small {\sc P. Pollack:} \textit{The exceptional set in the polynomial Goldbach problem}. Int. J. Number Theory, {\bf 7}(3) (2011),  579-591.} 

\bibitem{Saidak} {\small {\sc F. Saidak:} \textit{On Goldbach's conjecture for integer polynomials}. Amer. Math. Monthly, {\bf113} (2006), 541-545.}

\bibitem{Webb} {\small {\sc W. A. Webb:} \textit{On the representation of polynomials over finite fields as sums of powers and irreducibles}. Rocky Mountain J. Math., {\bf3}(1) (1973), 23-29.}



\end{thebibliography}
\end{document}